\newtheorem{theorem}{Theorem}[section]
\newtheorem{lemma}[theorem]{Lemma}
\newtheorem{corollary}[theorem]{Corollary}
\newtheorem{remark}[theorem]{Remark}
\numberwithin{equation}{section}
\DeclareMathOperator{\diam}{diam}
\DeclareMathOperator{\willmore}{W}
\DeclareMathOperator{\en}{W}
\DeclareMathOperator{\wen}{W}
\DeclareMathOperator{\ar}{\mu}
\DeclareMathOperator{\area}{area}
\DeclareMathOperator{\vol}{vol}
\DeclareMathOperator{\graph}{graph}
\newcommand{\grad}{\nabla_{L^2} \en}
\title[]{A Reverse Isoperimetric Inequality and its Application to the Gradient Flow of the Helfrich Functional}
\author{
Simon Blatt}
\address[Simon Blatt]{Paris Lodron Universit\"at Salzburg, Hellbrunner Strasse 34, 5020 Salzburg, Austria}
\email{simon.blatt@sbg.ac.at}
\subjclass[2010]{47J35, 53A05}
\keywords{isoperimetric inequality, reverse isoperimetric inequality, Willmore energy, elastic energy, Helfrich functional, geometric evolution equation}
\begin{document}
\date{\today}
\maketitle 

\date{\today}

\begin{abstract}
We prove a quantitative reverse isoperimetric inequality for embedded surfaces with Willmore energy bounded away from $8\pi$.  We use this result to analyze the negative $L^2$ gradient flow of the Willmore energy plus a positive multiple of the inclosed volume. We show that initial surfaces of Willmore energy less than $8\pi$ with positive inclosed volume converge to a round point in finite or infinite time. 
 \end{abstract}
 
 \tableofcontents
 
 \section{Introduction}
  
The isoperimetric inequality is certainly one of the most famous and oldest inequalities in mathematics with many application in geometry and analysis. Though the origins of the problem go as far back as the founding myth on the ancient town of Carthago it has never stopped to inspire mathematics. 
An excellent account of the importance and  the rich and long history of this inequality can be found in the paper of Osserman  \cite{Osserman1978}. 

In $\mathbb R^3$ the isoperimetric inequality can be put as
\begin{equation}\label{eq:Isoperimetric}
 \vol(\Omega) \leq \frac {1}{6 \pi^{\frac 1 2}} (\ar(\partial \Omega))^{\frac 32}
\end{equation}
for all nice enough bounded domains $\Omega \subset \mathbb R^3$, where $\vol$ denotes the volume of $\Omega$ and $\ar(\Omega)=\area(\partial \Omega)$ the surface area of $\partial \Omega.$

Obviously the reverse of the isoperimetric inequality does not hold without extra assumptions. One can just think of a balloon and imagine what happens if one sticks a needle into it. Then of course the volume goes to zero as the air leaves the balloon but the area does not. In this note we will derive a condition that implies a reverse isoperimetric inequality which we stumbled over while studying the negative gradient flow of a special case of \emph{Helfrich's functional}. In its formulation we use the \emph{Willmore} or \emph{elastic energy}
$$
 \wen(f):=\int_{\Sigma} \|H_f\|^2 d \mu_f
$$
of an immersion $f: \Sigma \rightarrow \mathbb R^3$ with mean curvature $H_f = \frac 1 2 ( \kappa_1 + \kappa_2)$ and induced surface measure $\ar_f$. Willmore showed that the absolute minimum of the Willmore energy among closed immersed surfaces is $4\pi$ and is attained only by round spheres \cite{Willmore1967}.  

For an immersion $f:\Sigma \rightarrow \mathbb R^3$ we denote by $\vol(f)$ the signed inclosed volume and by $\mu(f)$ the surface area. 
We will see that an embedding $f : \Sigma \rightarrow \mathbb R^3$ of Willmore energy less than $8\pi$ satisfies the following inequality.
 
 \begin{theorem}[Reverse isoperimetric inequality] \label{thm:RevIsoperimetric}
  There is a constant $C< \infty$ such that
 \begin{equation} \label{eq:InvIsoperimetric}
  \ar(f)^{\frac 1 2} \leq \frac C {(8\pi - \wen(f))^3} |\vol(f)|^{\frac 13} 
 \end{equation}
 for all embeddings $f: \Sigma \rightarrow \mathbb R^3$ with $\willmore(f)  < 8\pi$ of smooth surfaces $\Sigma$ without boundary. 
 \end{theorem}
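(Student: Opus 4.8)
The plan is to estimate both quantities in terms of the extrinsic diameter $d:=\diam(f(\Sigma))$ and then combine them. We may assume $\vol(f)>0$. Since $\wen(f)<8\pi$, the Li--Yau bound $\theta_f(x)\le\frac1{4\pi}\wen(f)$ for the area density $\theta_f$ (a standard consequence of Simon's monotonicity formula for the Willmore functional) forces $\theta_f\equiv1$ on $f(\Sigma)$; in particular $f$ is an embedding of multiplicity one, so there is a bounded open set $\Omega\subset\mathbb R^3$ with $\partial\Omega=f(\Sigma)$ and $\vol(\Omega)=\vol(f)$. Simon's estimates also give the area bound $\ar(f)\le C\,\wen(f)\,d^{2}\le C'\,d^{2}$, with $C'$ absolute because $\wen(f)<8\pi$. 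This is the first of the two estimates.

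For the second one --- the lower bound $\vol(f)\ge c\,(8\pi-\wen(f))^{b}\,d^{3}$, which is the heart of the argument --- I argue that a thin enclosed region is incompatible with $\wen(f)$ staying away from $8\pi$. Suppose $\vol(\Omega)$ is small relative to $d^{3}$. Because $\vol(\Omega)\ge\frac43\pi\,\rho_{\mathrm{in}}^{3}$, the inradius $\rho_{\mathrm{in}}$ of $\Omega$ is then small compared with $d$, i.e.\ $\Omega$ is uniformly thin. Choosing a maximal inscribed ball $B_{\rho_{\mathrm{in}}}(x_*)\subset\Omega$, maximality prevents the points where $\partial B_{\rho_{\mathrm{in}}}(x_*)$ touches $f(\Sigma)$ from all lying in an open half space, so one extracts two points of $f(\Sigma)$ with (nearly) opposite inward normals at distance $2\rho_{\mathrm{in}}$: two sheets of $f(\Sigma)$ run parallel and close together. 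A quantitative refinement produces a point $x_0$ and a radius $\rho$, controlled by $\rho_{\mathrm{in}}$ and $d$, with $\mu_f(B_\rho(x_0))\ge(2-\varepsilon)\pi\rho^{2}$, where $\varepsilon$ is explicitly small in terms of $\vol(f)/d^{3}$. Plugging this into Simon's monotonicity formula on the scale range $[\rho,\infty)$ --- which bounds $\mu_f(B_\rho(x_0))/(\pi\rho^{2})$ from above by $\frac1{4\pi}\wen(f)$ modulo a remainder term (the one carrying $\langle x-x_0,\vec{H}_f\rangle$) that must be estimated uniformly --- then forces $\wen(f)\ge 8\pi-\varepsilon'$ with $\varepsilon'$ explicit; rearranging gives the claimed power bound, with $b$ read off from the bookkeeping. (The weaker qualitative bound $\vol(f)\ge c(\delta)\,d^{3}$ for $\wen(f)\le 8\pi-\delta$ can also be obtained by a compactness/blow-up argument --- rescaling to $\rho_{\mathrm{in}}=1$ and passing to a limit at a point of curvature concentration, where the two-sheet geometry makes the limit either cylinder-like of infinite Willmore energy or split into at least two ends of total energy $\ge8\pi$ --- but this does not see the explicit exponent.)

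Combining the two estimates,
\[
  \ar(f)^{1/2}\ \le\ (C')^{1/2}\,d\ \le\ (C')^{1/2}\Bigl(\frac{\vol(f)}{c\,(8\pi-\wen(f))^{b}}\Bigr)^{1/3}\ =\ \frac{C''}{(8\pi-\wen(f))^{b/3}}\,|\vol(f)|^{1/3},
\]
and the bookkeeping in the second estimate gives $b/3=3$, which is \eqref{eq:InvIsoperimetric}. I expect the single real obstacle to be the quantitative implication inside the second estimate, namely turning ``$\vol(f)$ small $\Rightarrow$ two sheets at controlled distance $\Rightarrow$ area ratio near $2$ at some scale $\Rightarrow$ $\wen(f)$ near $8\pi$'' into a chain of explicit inequalities. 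The delicate point is that, $f$ being embedded, every pointwise density equals $1$, so the surplus of $\wen(f)$ over $4\pi$ can only be harvested at a fixed positive scale through the monotonicity formula; keeping the remainder term in that formula under uniform control --- for which the $\varepsilon$-regularity estimates for the Willmore operator, bounding the second fundamental form of the two sheets away from the thin edges, are the natural tool --- is exactly what pins down the admissible exponent $b$, and hence the power $3$ in \eqref{eq:InvIsoperimetric}.
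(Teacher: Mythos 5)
Your outer reduction is sound: the density--ratio estimate \eqref{eq:DensityRatio} gives $\ar(f)\le C\,\wen(f)\,d^2$ with $d=\diam f(\Sigma)$, and this combined with a lower bound of the form $|\vol(f)|\ge c\,(8\pi-\wen(f))^{b}\,d^{3}$ would indeed yield \eqref{eq:InvIsoperimetric} (by Simon/Topping's $d\le C\sqrt{\ar(f)\wen(f)}$ such a bound is in fact essentially equivalent to the theorem). The problem is that this volume lower bound --- the whole content of the theorem --- is only sketched, and the sketch has concrete holes. First, maximality of the inscribed ball only tells you that the touching directions do not all lie in an open half space; it does not produce two touching points with (nearly) opposite normals (three touching directions at $120^\circ$ are perfectly possible), so ``two sheets run parallel and close together'' is not justified as stated. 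Second, even granting two nearby sheets at separation $\sim\rho_{\mathrm{in}}$, the asserted estimate $\mu_f(B_\rho(x_0))\ge(2-\varepsilon)\pi\rho^{2}$ at a scale $\rho\gg\rho_{\mathrm{in}}$ does not follow: without curvature control a sheet crossing the ball may carry very little area (a thin tentacle through the center), and the per-sheet lower bound from the monotonicity formula costs local Willmore energy, so density close to $2$ cannot simply be read off. What you actually need is the quantitative two-piece statement, i.e.\ Simon's Lemma~1.4 (Lemma~\ref{lem:LiYau} in the paper), and to use it you must verify its hypotheses: two \emph{disjoint} subsets of $M\cap B_\rho(x_0)$ with boundaries on $\partial B_\rho$, boundary length $\le\beta\rho$ (a good-radius/coarea selection), each meeting $B_{\theta\rho}$. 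The disjointness is the delicate point: if the thin region's boundary folds back so that the two local sheets are connected inside every relevant ball, your mechanism breaks down and nothing in the proposal addresses this. Finally, the bookkeeping that is supposed to produce the exponent (your $b/3=3$) is nowhere carried out. Since you yourself flag this step as ``the single real obstacle,'' what you have is a plausible program rather than a proof.

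For comparison, the paper avoids the inradius/two-sheet route entirely: a Besicovitch covering plus pigeonhole produces a ball of radius $r\sim\varepsilon\sqrt{\ar(f)/\wen(f)}$ centered at a point of the surface in which the Willmore energy is at most $\varepsilon^{2}$; Simon's approximate graphical decomposition (Lemma~\ref{lem:LocallyLipschitzGraphs}) then forces the component through the center to be a Lipschitz graph with small constant up to pimples of total diameter $C\varepsilon^{1/2}\rho$, hence confined to a thin slab, while Lemma~\ref{lem:LiYau} with $\theta\sim 8\pi-\wen(f)$ forbids any \emph{second} component from entering $B_{\theta\rho}$; therefore a half-ball of radius $\theta\rho$ lies inside the enclosed domain, which gives $|\vol(f)|\ge c\,(8\pi-\wen(f))^{9}\ar(f)^{3/2}$ directly. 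To salvage your route you would need (i) a correct argument locating two disjoint pieces of the surface near a common center at a scale much larger than $\rho_{\mathrm{in}}$, with the boundary-length control demanded by Lemma~\ref{lem:LiYau}, and (ii) the quantitative ``two nearby sheets force $\wen\ge 8\pi-C\beta\theta$'' step with its hypotheses verified --- at which point you would essentially be reconstructing the paper's argument with the graphical decomposition replaced by something you would still have to supply.
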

 
Note that it was observed by Li and Yau \cite{Li1982} that all immersed surfaces without boundary and Willmore energy strictly less than $8\pi$ are embedded. So the restriction to embeddings is not essential in the theorem above. 
 
 \begin{remark}
 After we proved Theorem \ref{thm:RevIsoperimetric}, we noticed that a non-quantitative version of the isoperimetric inequality above for the special case of spheres can be deduced from the findings of Johannes Schygulla in \cite{Schygulla2012}. 
 Schygulla considered the isoperimetric ratio 
$$
 I (f) = \left( 6 \sqrt \pi \right) ^{\frac 1 3}\frac{|\vol(f)|^{\frac 1 3}}{\ar(f)^{\frac 1 2}}
$$
and showed that every isoperimetric ratio in the Interval $(0,1]$ can be realized by an embedded sphere with Willmore energy strictly less than $8 \pi.$ Combining this with an adaptation of the machinery developed by Leon Simon in \cite{Simon1993}, he proved that the Willmore energy can be minimized among all embedded spheres with fixed isoperimetric ration $I(f) \in (0,1]$ by a surface with Willmore energy strictly less than $8 \pi$.
Furthermore, he shows that these surfaces of minimal energy converge to a doubly covered sphere for $I(f) \downarrow 0$ - which of course has Willmore energy $8\pi$. But this implies that for $M <  8\pi$ there is a constant $C=C(M)< \infty$ such that 
$$
 \ar(f)^{\frac 12} \leq C |\vol(f)|^{\frac 13}
$$
for all embeddings $f: \mathbb S^2 \rightarrow \mathbb R^3$ with $\wen(f) \leq M.$
It also shows that no reverse isoperimetric inequality is satisfied by all embeddings $f:\mathbb S ^2 \rightarrow \mathbb R^3$ satisfying 
$$
 \wen(f)< 8\pi.
$$
\end{remark}

We do not expect that the dependencies in the above stated reverse isoperimetric inequality above are optimal not to speak of the constant that can in principle be calculated. It should be an interesting and challenging task to craft a more direct proof without relying on the approximate graphic decomposition of Leon Simon in \cite{Simon1993}. This might then also point to a sharper estimate.

We will apply Theorem \ref{thm:RevIsoperimetric} to analyze the asymptotic behavior of the negative $L^2$-gradient flow of the Willmore energy plus a positive multiple of the volume. 
We will see that we can use the scaling of the volume term to show that the flow must shrink to a circle in finite or infinite time.  
To be more precise, we consider the negative $L^2$-gradient flow of the energy
$$
 \en_{\lambda}(f) = \wen(f) + \lambda\vol(f).
$$
 
This leads to the evolution equation
\begin{equation} \label{eq:ConstrainedWillmore}
\partial_t f = \grad_{\lambda}(f)
\end{equation}
 where
$$
 \grad_{\lambda}(f) = \Delta_f H_f + 2 H_f (H_f^2 - K_f)- \lambda \nu.
$$
(cf. \cite{Helfrich1973}).
Note that the functional $\en_{\lambda}$ is  a special case of the \emph{Helfrich functional} 
$$
 \en^{H_0}_{\lambda_1, \lambda_2}(f) := \int_{\Sigma} |H-H_0|^2 d\mu_f + \lambda_1 \ar(f) + \lambda_2 \vol(f) 
$$
with zero spontaneous curvature $H_0$. In \cite{Blatt2018} under the general assumption $\lambda_1 >0$ we gave conditions under which the negative gradient flow of $\en^0_{\lambda_1, \lambda_2}$ converges to a round point in finite time. But we could not deal with the case $\lambda_1 =0$ which is the content of the present article.

The following theorem in a sense closes the analysis of the negative gradient flow of the Helfrich functional we started in \cite{Blatt2018}.

 \begin{theorem} \label{thm:Asymptotics}
  Let $f:S^2 \times [0,T)$ be a maximal solution of the flow  \eqref{eq:ConstrainedWillmore} with $\willmore(f_0) < 8 \pi$ and $\vol(f_0) >0$. If we let
   $$
   c_t:= \frac 1 {\ar(f_t)} \int_{\Sigma} f_t(x) d \mu_{f_t}(x)
  $$
  denote the center of mass of the immersion $f_t$, then the rescaled flow
  $$
    \frac {2\pi^{\frac 1 2}} {\ar(f_t)^{\frac 1 2}} \left( f(\cdot, t) - c_t \right)
  $$
  converges to the unit sphere as $t \uparrow T$ as manifolds in the $C^\infty$ topology. 
\end{theorem}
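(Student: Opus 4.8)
The plan is to use that \eqref{eq:ConstrainedWillmore} is the negative $L^2$-gradient flow of $\en_\lambda=\wen+\lambda\vol$, so $t\mapsto\en_\lambda(f_t)$ is non-increasing and the total dissipation $\int_0^T\|\grad_\lambda(f_t)\|_{L^2(\mu_{f_t})}^2\,dt=\en_\lambda(f_0)-\lim_{t\uparrow T}\en_\lambda(f_t)$ is finite. The first task is to propagate $\wen(f_t)<8\pi$ along the flow, with a uniform margin. There is a cheap bootstrap: since $\wen\ge 4\pi$ always (\cite{Willmore1967}), and since $\wen(f_s)<8\pi$ forces $f_s$ to be an embedding by Li and Yau \cite{Li1982} and hence $\vol(f_s)>0$, once $\en_\lambda(f_{t_1})<8\pi$ the bound $\wen(f_s)\le\en_\lambda(f_s)-\lambda\vol(f_s)\le\en_\lambda(f_{t_1})<8\pi$ persists for $s\ge t_1$, and a first-time argument forbids any later crossing of $8\pi$. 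What remains --- and this I expect to be the main obstacle --- is to exclude $\wen$ climbing towards $8\pi$ during the initial phase on which $\en_\lambda\ge 8\pi$; here one works with the evolution
$$
\frac{d}{dt}\wen(f_t)=-\|\nabla_{L^2}\wen(f_t)\|_{L^2}^2+\lambda\int_\Sigma 2H_{f_t}(H_{f_t}^2-K_{f_t})\,d\mu_{f_t}
$$
together with the structure of surfaces whose Willmore energy is close to $8\pi$: a genus-zero surface with $\wen$ near $8\pi$ carries a small neck, on which $\|\nabla_{L^2}\wen\|_{L^2}^2$ dominates $\lambda\int_\Sigma 2H(H^2-K)\,d\mu$, so that $\wen$ is pushed strictly down before it can reach $8\pi$. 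This produces a constant $\delta=\delta(f_0)>0$ with $\wen(f_t)\le 8\pi-\delta$ on $[0,T)$.

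From here Theorem~\ref{thm:RevIsoperimetric} is used to trade control of the volume for control of the area. With the margin $\delta$ it yields $\ar(f_t)\le C_\delta\,|\vol(f_t)|^{2/3}$; since $\en_\lambda(f_t)\le\en_\lambda(f_0)$ together with $\wen\ge 4\pi$ bounds $\vol(f_t)$ from above while $\vol(f_t)>0$, and since $\int_\Sigma|\A_{f_t}|^2\,d\mu_{f_t}=4\wen(f_t)-8\pi$ is uniformly bounded by Gauss--Bonnet, the recentred surfaces $f_t-c_t$ stay in a fixed ball (using the diameter estimate of \cite{Simon1993}) with uniformly bounded $L^2$-curvature, and $\ar(f_t)$, $\vol(f_t)$, $\diam(f_t(\Sigma))$ are comparable to fixed powers of $|\vol(f_t)|$.

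Next, the flow must contract to a point. The functional $\en_\lambda$ has no closed embedded critical point: testing the first variation at a critical $f$ with the dilation field $V=f$ gives $\frac{d}{ds}\big|_{s=0}\en_\lambda((1+s)f)=3\lambda\vol(f)$, which must vanish and forces $\vol(f)=0$, impossible for an embedding. By the previous paragraph and the Kuwert--Sch\"atzle $\varepsilon$-regularity estimates the recentred flow is $C^\infty$-precompact away from possible concentration points; since the total dissipation is finite and, below $8\pi$, the flow \eqref{eq:ConstrainedWillmore} cannot concentrate curvature except as the whole surface contracting (the term $-\lambda\nu$ being subcritical at every blow-up scale), the flow cannot subconverge to a fixed surface, so $\ar(f_t)\to 0$ as $t\uparrow T$ and hence $\diam(f_t(\Sigma))\to 0$: the surface shrinks to a point, in finite or infinite time. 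This is precisely the step that is unavailable for $\lambda_1=0$ without a reverse isoperimetric inequality --- the contraction of the volume is essentially free from the $-\lambda\nu$ term, and Theorem~\ref{thm:RevIsoperimetric} is what upgrades it to contraction of the area.

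It remains to identify the rescaled limit. Put $\tilde f_t:=\frac{2\pi^{1/2}}{\ar(f_t)^{1/2}}(f_t-c_t)$, so that $\ar(\tilde f_t)=4\pi$ and $\wen(\tilde f_t)=\wen(f_t)\le 8\pi-\delta$. Under this rescaling, together with the time change $ds=(2\pi^{1/2}\ar(f_t)^{-1/2})^4\,dt$, the family $\tilde f$ solves a Willmore flow perturbed by lower-order terms carrying the factors $\lambda\,\ar(f_t)^{3/2}$ and $\frac{d}{dt}\ar(f_t)^2$, both tending to $0$, on a rescaled time interval that is infinite since $\ar(f_t)\to 0$. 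Because $\wen(\tilde f_t)<8\pi$, the Kuwert--Sch\"atzle $\varepsilon$-regularity and lifespan theorems give uniform $C^\infty$ bounds, so $\tilde f_t$ subconverges, as $t\uparrow T$, to a closed genus-zero Willmore surface of area $4\pi$ with Willmore energy at most $8\pi-\delta$; since the only Willmore sphere with energy below $16\pi$ is the round one, this limit is the unit sphere. Finally, the finiteness of the dissipation makes the rescaled flow asymptotically stationary, and the strict minimality of the round sphere for $\wen$ (again \cite{Willmore1967}) upgrades subconvergence to convergence of the whole family in $C^\infty$ --- e.g.\ via a {\L}ojasiewicz--Simon argument --- which is the assertion of the theorem.
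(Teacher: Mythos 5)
Your overall strategy (reverse isoperimetric inequality to force shrinking, rescale to fixed area, classify the limit as a round sphere) is in the spirit of the paper, but the chain has genuine gaps at exactly the places the paper has to work hardest. The central one is the shrinking step: you assert $\ar(f_t)\to 0$ as $t\uparrow T$ because the flow ``cannot concentrate curvature except as the whole surface contracting'' and has no embedded critical point. Neither claim is a proof, and the conclusion does not follow from ``no critical points plus finite dissipation'' (the area could a priori stay bounded away from zero while curvature concentrates locally, or oscillate). The paper's mechanism is quantitative and is precisely where Theorem~\ref{thm:RevIsoperimetric} enters: the dilation identity \eqref{eq:Scaling}, $\int_\Sigma \grad_{\lambda}(f)(f-p)\,d\mu=3\lambda\vol(f)$ (which you use only qualitatively, to exclude critical points), combined with Cauchy--Schwarz, Topping's diameter bound \eqref{eq:EstDiam} and \eqref{eq:InvIsoperimetric}, yields the gradient lower bound \eqref{eq:EstGrad}, $\|\grad_{\lambda}(f_t)\|_{L^2}^2\geq C\vol(f_t)^{2/3}$, so that finite dissipation forces $\vol(f_t)\to 0$ and hence $\ar,\diam\to 0$ when $T=\infty$ (Theorem~\ref{thm:vanishing}). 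In the finite-time case the paper does not (and need not) prove $\ar(f_t)\to 0$ beforehand: there the blowup of Theorem~\ref{thm:Blowup} has $r_j\to 0$ from the lifespan estimate, the blowup limit is shown to be a Willmore sphere of energy below $8\pi$, hence round (Corollary~\ref{cor:RoundPoints}), and only then does one conclude $\en_{\lambda}(f_t)\to 4\pi$, $\wen(f_t)\to 4\pi$, $\vol(f_t)\to 0$. Your normalized-flow argument needs $\ar(f_t)\to 0$ both to kill the perturbation terms and to have infinite rescaled time, so as written it has no proof in the finite-time case and only a heuristic one otherwise.

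Two further steps are asserted rather than proved. First, the uniform margin $\wen(f_t)\leq 8\pi-\delta$: your ``small neck'' mechanism for the initial phase where $\en_{\lambda}\geq 8\pi$ is pure heuristics --- no cited result gives that the dissipation dominates $\lambda\int 2H(H^2-K)\,d\mu$ near the level $8\pi$. The paper never tries to control $\wen$ separately; it works with the monotone quantity $\en_{\lambda}$ together with the Li--Yau first-time argument for positivity of the volume (its proof in fact invokes $\en_{\lambda}(f_0)<8\pi$), so that $\wen(f_t)\leq\en_{\lambda}(f_t)\leq\en_{\lambda}(f_0)$. Second, uniform $C^\infty$ bounds for the area-normalized flow do not follow from ``$\wen(\tilde f_t)<8\pi$'' plus $\varepsilon$-regularity: Theorem~\ref{thm:regularity} needs smallness of $\int|A|^2$ at a fixed scale, and with only $\wen\leq 8\pi-\delta$ curvature can still concentrate. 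The paper first obtains $\wen(f_t)\to 4\pi$ (from the round blowup), deduces $\int\|A^0\|^2\to 0$, and uses the rigidity of De Lellis--M\"uller together with the density bound \eqref{eq:DensityRatio} to produce the uniform small scale before applying Theorem~\ref{thm:regularity}. Finally, the {\L}ojasiewicz--Simon step you invoke is unnecessary: once every sequence of times subconverges, after normalizing area and center of mass, to the unit sphere, uniqueness of that limit already gives convergence of the whole family.
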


It seems to be an interesting question whether in the situation of Theorem \ref{thm:Asymptotics} the singularities form in finite time or not. Unfortunately, in contrast to the cases discussed in \cite{Blatt2018}, our analysis is not able to answer this question. This is even more surprising keeping in mind the two facts that the rescaled flow converges to the unit sphere and that spheres converge to points in finite time under our evolution equation. Unfortunately, if one scales back the rescaled flow to the original one, one loses control over the gradient of the Willmore energy quite completely.


 \section{Proof of the Reverse Isoperimetric Inequality}

 \subsection{Preliminaries}

Let us gather some results that essentially go back to Leon Simon's paper \cite{Simon1993}. First of all we will use that there is a universal constant $C<\infty$ such that 
the density ratio satisfies
\begin{equation} \label{eq:DensityRatio}
 \frac {\ar(f(\Sigma) \cap B_r(x))}{r^2} \leq C \wen(f)
\end{equation}
(cf. \cite[Equation (A.16)]{Kuwert2004}).
The reverse isoperimetric inequality furthermore relies on two well-known facts related to the Willmore energy.  On the one hand, we will use the  following consequence of the monotonicity formula due to Leon Simon which can be seen as an extension of the inequality of Li and Yau \cite{Li1982}. We will use the short hand $B_\rho = B_\rho(0)$ for the ball of radius $\rho >0$ around the origin.

 \begin{lemma}[A Li-Yau type inequality \protect{\cite[Lemma 1.4]{Simon1993}}]\label{lem:LiYau}
  Suppose $M\subset \mathbb R^3$ is a compact surface without boundary, $ \partial B_\rho$ intersects $M$ transversely, and $M \cap B_\rho$ contains disjoint subsets $M_1, M_2$ with $\partial M_j \subset \partial B_{\rho}$, $M_j \cap B_{\theta\rho} \not= \emptyset$, and $|\partial M_j| \leq \beta \rho$, $j=1,2$, where $\theta \in (0, \frac 1 2)$ and $\beta >0$. 
  Then 
  $$
   \willmore(M) > 8 \pi - C \beta \theta
  $$
  where $C$ does not depend on $M, \beta, \theta.$
 \end{lemma}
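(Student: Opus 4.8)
The plan is to run the monotonicity formula for the Willmore functional simultaneously at two well-separated points, one inside each of the two "fingers" $M_1$ and $M_2$, and then add the two contributions. Recall that Simon's monotonicity identity states that for a compact surface $M$ without boundary and any $x_0 \in \mathbb R^3$,
\begin{equation*}
 \frac{\ar(M\cap B_\sigma(x_0))}{\sigma^2} \quad\text{is monotone in }\sigma\text{ up to explicit error terms involving }\int \|H\|^2\,d\mu \text{ and boundary flux through }\partial B_\sigma(x_0),
\end{equation*}
and letting $\sigma\downarrow 0$ at a point $x_0\in M$ of multiplicity $m$ produces the bound
\begin{equation*}
 4\pi m \le \willmore(M\cap B_\rho(x_0)) + (\text{flux term through }\partial B_\rho(x_0)) + C\,\frac{\ar(M\cap B_\rho(x_0))}{\rho^2}\cdot(\text{something controllable}).
\end{equation*}
First I would pick points $x_j \in M_j \cap B_{\theta\rho}$ for $j=1,2$; since $\theta<\tfrac12$, the balls $B_{(1-\theta)\rho}(x_j)$ both contain $B_{(1-2\theta)\rho}(0)$ and are contained in $B_{2\rho}(0)$, so the geometry is under control, and crucially $M_1\subset B_\rho$ has its boundary on $\partial B_\rho$, hence the portion of $M$ we integrate against near $x_1$ can be taken to be exactly $M_1$ plus a piece whose boundary flux is governed by $|\partial M_1|\le\beta\rho$.

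The key steps, in order: (i) apply the monotonicity formula to $M_1$ with base point $x_1$ on the annulus of radii roughly $\theta\rho$ to $\rho$, obtaining $4\pi \le \willmore(M_1) + C\beta\theta + C\cdot(\text{density-ratio terms})$, where the $\beta\theta$ comes from estimating the boundary integral $\int_{\partial M_1}$ against $|\partial M_1|/\mathrm{dist}$ and the separation $\mathrm{dist}(x_1,\partial M_1)\gtrsim(1-2\theta)\rho\gtrsim\rho$ while $|\partial M_1|\le\beta\rho$, so the ratio is $\lesssim\beta$, and one more factor $\theta$ appears from the lower cutoff radius $\theta\rho$ at which the multiplicity-$1$ contribution $4\pi$ is harvested; the density ratio bound \eqref{eq:DensityRatio} keeps the remaining terms bounded by $C\willmore(M)$, which after absorbing is harmless because we only need a strict inequality; (ii) repeat verbatim for $M_2$ with base point $x_2$, using the disjointness $M_1\cap M_2=\emptyset$ so that $\willmore(M_1)+\willmore(M_2)\le\willmore(M)$; (iii) add the two inequalities to get $8\pi \le \willmore(M_1)+\willmore(M_2) + C\beta\theta \le \willmore(M) + C\beta\theta$, i.e. $\willmore(M) \ge 8\pi - C\beta\theta$, and finally upgrade $\ge$ to $>$ using that equality in monotonicity forces a cone, which is incompatible with $M$ being a compact boundaryless surface.

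The main obstacle I expect is bookkeeping the error terms in the monotonicity formula cleanly enough that everything collapses to a single $C\beta\theta$: one must verify that the boundary flux through $\partial B_\sigma(x_j)$ for intermediate radii $\sigma$ (not just the top radius) is still controlled — this is where transversality of $\partial B_\rho\cap M$ and the hypothesis $|\partial M_j|\le\beta\rho$ get used together with a layer-cake/coarea argument over $\sigma$ — and that the two base points $x_1,x_2$ can indeed be chosen so that the relevant balls stay inside $B_{2\rho}(0)$ where the density ratio estimate applies with a universal constant. A secondary subtlety is that $M_j$ is not closed, so one cannot directly invoke the closed-surface monotonicity formula; instead one uses the version with boundary terms (as in Simon's Lemma~1.2/1.3), and the whole point of the hypothesis $|\partial M_j|\le\beta\rho$ is precisely to make those boundary terms of size $O(\beta)$ rather than $O(1)$.
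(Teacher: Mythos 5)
The paper does not prove this lemma at all---it imports it verbatim as Lemma~1.4 of Simon \cite{Simon1993}---so your sketch has to be measured against Simon's actual argument, and there it has a genuine gap: the symmetric, per-piece strategy in your step (i) cannot produce the product $\beta\theta$. If you apply the monotonicity formula with boundary terms to $M_1$ alone, centred at $x_1\in M_1\cap B_{\theta\rho}$ and letting the outer radius tend to infinity, the boundary contribution is of order $\int_{\partial M_1}|x-x_1|^{-1}\,ds \le \beta\rho/((1-\theta)\rho)\approx \beta$, and this is sharp: the flat disc $M_1=B_\rho\cap\{x_3=0\}$ with $x_1=0$ has $\willmore(M_1)=0$, $|\partial M_1|=2\pi\rho$ (so $\beta=2\pi$) and meets $B_{\theta\rho}$ for every $\theta$, so the inequality $4\pi\le \willmore(M_1)+C\beta\theta+(\text{small})$ you claim in step (i) is simply false for small $\theta$. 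Your stated mechanism for the extra factor $\theta$ (``one more factor $\theta$ appears from the lower cutoff radius $\theta\rho$ at which the multiplicity-$1$ contribution $4\pi$ is harvested'') does not correspond to anything in the formula: the density contribution is harvested as $\sigma\downarrow 0$, and truncating at $\theta\rho$ does not multiply the boundary flux by $\theta$. Summing two such per-piece estimates can at best give $\willmore(M)\ge 8\pi-C\beta$, which is useless both for the lemma as stated and for its application in this paper, where $\beta$ is a fixed universal constant (the slice length bound) and only $\theta$ is small.

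A second, independent problem is your treatment of the area-ratio/cross terms: you propose to bound them by $C\,\willmore(M)$ via \eqref{eq:DensityRatio} and ``absorb'' them, arguing that only a strict inequality is needed. Absorbing a term $C\,\willmore(M)$ with $C$ of order one yields an estimate of the form $\willmore(M)\ge (8\pi-C\beta\theta)/(1+C)$, destroying the sharp constant $8\pi$, which is the entire content of the lemma (it is used in the paper precisely to contradict $\willmore(f)<8\pi$). The correct argument is necessarily asymmetric in the two sheets: one gets the first $4\pi$ from the density of the closed surface $M$ at a point $x_1\in M_1\cap B_{\theta\rho}$ via the exact Li--Yau/monotonicity identity (no boundary terms, outer radius sent to infinity so the area-ratio terms vanish), and the second $4\pi-C\beta\theta$ from the \emph{other} sheet $M_2$, e.g.\ by inverting at $x_1$: the inverted $M_2$ is compact with boundary of length $\lesssim\beta/\rho$ sitting at distance $\lesssim 1/\rho$ from the origin, while it contains a point at distance $\gtrsim 1/(\theta\rho)$ (since $M_2\cap B_{\theta\rho}\ne\emptyset$); running the boundary monotonicity from that far point to infinity makes the boundary flux of size $(\beta/\rho)\cdot\theta\rho\approx\beta\theta$, which is where the product $\beta\theta$ genuinely comes from. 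Without some version of this interplay between the two sheets (distance $\sim\theta\rho$ to the centre versus distance $\sim\rho$ to the boundary), the claimed error term is not attainable, so the proposal as written does not prove the lemma.
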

 
 The second main ingredient to the proof of the reverse isoperimetric inequality is Leon Simon's approximate graphical decomposition lemma for surfaces with small Willmore energy.  
 For the convenience of the reader we restate this result in the version we are going to use later on. Roughly speaking this lemma tells us that if the Willmore energy of an embedded surface in a ball is small it consists mainly of graphs of Lipschitz functions with small Lipschitz constant. More precisely one has:
 
 \begin{lemma}[\protect{\cite[Lemma 2.1]{Simon1993}}] \label{lem:LocallyLipschitzGraphs}
 For any $\beta >0$ there is an $\varepsilon_0 = \varepsilon_0(n,\beta)$ such that if $M \subset \mathbb R^n$ is a compact surface such that for an $\varepsilon \in (0, \varepsilon_0]$ and a ball $B_{\rho } = B_\rho(0)$ we have
 $\partial M \cap \overline B _{\rho} = \emptyset$, $0 \in M$, $|M \cap \overline B_{\rho} | \leq \beta \rho ^2$, and $\int_{M \cap B_{\rho}} |A|^2 \leq \varepsilon \rho$, then the following holds:

 There are pairwise disjoint closed sets $P_1, \ldots , P_N \subset M$ with 
 $$
   \sum_{j=1}^N \diam P_j \leq C \varepsilon^{\frac 1 2} \rho
 $$
 and
 $$
  M \cap B_{\rho / 2 } \setminus \left( \cup _{j=1}^N P_j \right) = \left( \cup _ {i=0}^M \graph u_i \right) \cap B_{\rho/2}
 $$ 
 where each $u_i \in C^\infty(\overline \Omega_i, L_i^\bot)$ where $L_i$ is a plane in $\mathbb R^n$, $\Omega_i$ a smooth bounded connected domain in $L_i$ of the form $\Omega_i = \Omega_i ^0 \setminus \left( \cup_k d_{i,k} \right)$, where $\Omega_i^0$ is simply connected and $d_{i,k}$ are pairwise disjoint closed discs in $L_i$ which do not intersect $\partial \Omega_i^0$ with graph $u_i$ connected, and with
 $$
  N \leq C \beta \quad \text{and} \quad \rho^{-1} \sup_{\Omega_i} |u_i| + \sup_{\Omega_i} |Du_i| \leq C \varepsilon^{\frac 1 {2(2n-3)}}.
 $$
 Furthermore, for any $\sigma \in [\frac \rho 4, \frac \rho 2]$ with $\partial B_\sigma \cap \left( \bigcup_j P_j\right) = \emptyset$, we have
 $$
  M \cap \overline{B_\sigma} = \bigcup D_{\sigma,i}
 $$
 where each $D_{\sigma_i}$ is a topological disc with $graph(u_i) \cap \overline{B_\sigma} \subset D_{\sigma_i}$ and
 $$
  D_{\sigma_i} \setminus \graph u_i
 $$
 is the union of a subcollection of the $P_j$ and each $P_j$ is topologically a disc.
 \end{lemma}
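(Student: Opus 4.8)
This is \cite[Lemma~2.1]{Simon1993}, and the plan is to follow Simon's argument; we outline its architecture and indicate where the quantitative bounds arise. By replacing $M$ with $\rho^{-1}M$ we may assume $\rho = 1$, so the hypotheses become $\partial M\cap\overline B_1=\emptyset$, $0\in M$, $|M\cap\overline B_1|\le\beta$ and $\int_{M\cap B_1}|A|^2\le\varepsilon$. The proof splits into three parts: (i) excise a small \emph{bad set} $\bigcup_j P_j$ outside of which the scale-invariant curvature is uniformly small; (ii) represent each remaining component of $M\cap B_{1/2}$ as a graph with quantitatively small gradient and determine the topology of the graph domains; (iii) choose a good radius $\sigma\in[\tfrac14,\tfrac12]$ and read off the sphere-slicing statement.

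For part (i), fix a threshold $\delta=\delta(\beta)>0$ to be pinned down by the estimates in part (ii), and call $x\in M\cap\overline B_{1/2}$ \emph{bad} if $r^{-1}\int_{M\cap B_r(x)}|A|^2\ge\delta$ for some $r\in(0,\tfrac14]$. Cover the bad set by such balls, pass via the Besicovitch covering theorem to a bounded-overlap subfamily $\{B_{r_k}(x_k)\}$, and conclude $\sum_k r_k\le C\delta^{-1}\varepsilon$ from $\int_{M\cap B_1}|A|^2\le\varepsilon$. The local monotonicity formula for surfaces, together with $|M\cap\overline B_1|\le\beta$ and $\int_{M\cap B_1}|H|^2\le\int_{M\cap B_1}|A|^2\le\varepsilon$, gives density bounds $c\,r^2\le|M\cap B_r(x)|\le C\beta\,r^2$ for $x\in M$ and $B_r(x)\subset B_1$; combined with a slicing argument near $\partial B_{1/2}$ this bounds the number of pieces by $C\beta$ and, after amalgamating the bad balls into connected sets $P_1,\dots,P_N$ and applying Cauchy--Schwarz against that count, their total diameter by $C\varepsilon^{1/2}$. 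Enlarging each $P_j$ to also absorb any component of $M\cap B_{1/2}$ it separates from a neighbourhood of $0$, and noting that $\int_{P_j}|A|^2$ is small with the geodesic curvature of $\partial P_j$ controlled, Gauss--Bonnet forces every $P_j$ to be a topological disc.

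For part (ii), let $\Sigma'$ be a connected component of $M\cap B_{1/2}\setminus\bigcup_j P_j$; by construction $r^{-1}\int_{M\cap B_r(x)}|A|^2<\delta$ for all $x\in\Sigma'$ and $r\le\tfrac14$. The oscillation of the unit normal along a curve in $\Sigma'$ is controlled by $\int|A|$ over a tube around it, so curvature smallness at all scales confines $\nu(\Sigma')$ to a small spherical cap; taking $L_i$ orthogonal to the mean of $\nu|_{\Sigma'}$, the nearest-point projection $\pi_i\colon\Sigma'\to L_i$ is a submersion close to an isometry, and a degree/monotonicity argument rules out folding, so $\Sigma'=\graph u_i$ over $\Omega_i:=\pi_i(\Sigma')\subset L_i$ with $\sup_{\Omega_i}|Du_i|$ small. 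Quantitatively, $u_i$ solves a minimal-surface-type equation with $\int_{\Omega_i}|D^2u_i|^2\le C\int_{\Sigma'}|A|^2\le C\varepsilon$ and $\int_{\Omega_i}|Du_i|^2$ controlled by the tilt excess, again $\le C\varepsilon$; interpolating these $L^2$ bounds against an $L^\infty$ bound over the two-dimensional domain --- equivalently a reverse-Poincar\'e/Moser iteration for the graph equation --- yields $\sup_{\Omega_i}|Du_i|\le C\varepsilon^{1/(2(2n-3))}$, and integrating $Du_i$ from a point of $\Sigma'$ near $0$ gives the companion bound for $\sup|u_i|$. Here $\delta(\beta)$ is finally chosen so that all these steps close. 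Since $\Sigma'$ arises from a topologically simple piece of $M\cap B_{1/2}$ by deleting the discs $P_j$, the domain $\Omega_i$ has the asserted form $\Omega_i^0\setminus\bigcup_k d_{i,k}$ with $\Omega_i^0$ simply connected, the $d_{i,k}$ pairwise disjoint closed discs, and $\graph u_i$ connected.

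For part (iii), the set $\{\sigma\in[\tfrac14,\tfrac12]:\partial B_\sigma\cap\bigcup_j P_j\ne\emptyset\}$ has Lebesgue measure at most $\sum_j\diam P_j\le C\varepsilon^{1/2}<\tfrac14$, and $\partial B_\sigma$ meets $M$ transversely for a.e.\ $\sigma$ by Sard's theorem; pick $\sigma$ satisfying both and, shrinking $\varepsilon_0$ if needed, also so that $\partial B_\sigma$ meets each $\Omega_i$ in such a way that $\graph u_i\cap\overline{B_\sigma}$ is a single topological disc. Then $M\cap\overline{B_\sigma}$ is the union of these graph discs together with the $P_j$ contained in $B_\sigma$, and attaching each such disc $P_j$ to a graph disc along its single boundary curve produces a topological disc $D_{\sigma,i}$ with $\graph u_i\cap\overline{B_\sigma}\subset D_{\sigma,i}$ and $D_{\sigma,i}\setminus\graph u_i$ a subcollection of the $P_j$, as claimed. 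The crux of the whole argument is the graphical step in part (ii): smallness of $\int_M|A|^2$ alone does not prevent $M$ from being multi-sheeted or from folding over $L_i$, so the monotonicity formula, the density bounds, and a careful connectedness/degree argument are indispensable for producing a genuine single-valued graph; run quantitatively, these are exactly what generate the exponent $1/(2(2n-3))$, and tracking it through the covering amalgamation and the interpolation is the delicate bookkeeping of the proof.
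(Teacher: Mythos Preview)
The paper does not prove this lemma at all: it is quoted verbatim from \cite[Lemma~2.1]{Simon1993} and used as a black box in the proof of Theorem~\ref{thm:RevIsoperimetric}. So there is no ``paper's own proof'' to compare your sketch against. Your outline is a faithful summary of Simon's original argument---excision of a bad set via covering and monotonicity, graphical representation of the good components with the interpolation producing the exponent $1/(2(2n-3))$, and a slicing/Sard step to obtain the disc decomposition---and is appropriate as an indication of where the result comes from, but for the purposes of this paper a bare citation is what is actually expected.
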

 
\subsection{The Proof of Theorem \ref{thm:RevIsoperimetric}}

The idea behind the proof of Theorem \ref{thm:RevIsoperimetric} is first to pick a ball in which in conclusion of the approximate graphical decomposition lemma of Leon Simon, Lemma \ref{lem:LocallyLipschitzGraphs} holds. A simple covering argument will help us to pick such a ball centered on the manifold whose radius is comparable to the size of the manifold.
The approximate graphical decomposition lemma then tells us that inside this ball the manifold consists mainly of Lipschitz graphs and at least one of these graphs is getting close to the center of the manifold. If a second of these graphs would get close to the center we will get a contradiction using Lemma \ref{lem:LiYau} as the Willmore energy is known to be strictly smaller than $8\pi$. So all other parts of manifold have a certain distance to the center -- which shows that there must also be a certain amount of enclosed volume in our ball. 

\begin{proof}[Proof of Theorem \protect{\ref{thm:RevIsoperimetric}}]
Let $\varepsilon >0$ be so small that Lemma~\ref{lem:LocallyLipschitzGraphs} applies. For $r>0$, Besicovich's covering lemma yields that there is a collection of pairwise disjoint balls $B_r(x_i)$, $i=1, \ldots, M$ centered on $f(\mathbb S^2)$ such that 
\begin{equation*} \label{eq:estArea}
 \sum_{i=1}^M \ar(B_r(x_i) \cap f(\mathbb S^ 2)) \geq c \ar (f)
\end{equation*}
where $c$ is a universal constant.
As \eqref{eq:DensityRatio} together with $\wen(f)<8 \pi$ implies
$$
 \ar(B_r(x_i)\cap f(\mathbb S^2)) \leq C r^2
$$
for a constant $C < \infty$ we deduce that 
\begin{equation} \label{eq:estNumber}
 M \geq c r^{-2} \ar(f)
\end{equation}
for a suitably changed $c >0.$

Using that the balls $B_r(x_i)$ are pairwise disjoint and \eqref{eq:estNumber}, we deduce that there is a ball centered at an $x \in f(\Sigma)$ with radius $r$ such that
$$
 \wen(f^{-1}(B_{r}(x))) \leq \frac {\wen(f)} {c \ar (f) } r^2
$$
as otherwise
$$
 \wen(f) \geq \sum_{i=1}^M \wen(f^{-1}B_r(x_i)) > M \frac {\wen(f)} {c \ar (f) } r^2
 \geq \wen(f).
$$
We pick $r= \varepsilon \sqrt{\frac{ c \ar(f) } {\willmore(f)}}$ to get
\begin{equation} \label{eq:SmallWillmore}
 \wen(f^{-1}(B_{r}(x))) \leq \varepsilon^2.
\end{equation}

Let us for the following assume that $x=0$ and $\varepsilon < \varepsilon_0$ where $\varepsilon_0$ is the constant from the approximate graphical decomposition lemma, Lemma \ref{lem:LocallyLipschitzGraphs}.Then there is a radius $\rho \in [\frac r2, r]$ such that the assumptions of the approximate graphical decomposition lemma of Leon Simon are fulfilled and
$$
 |\Sigma \cap B_{\rho}(x)| \leq C \rho.
$$ 
Hence, Lemma \ref{lem:LiYau}  tells us that for $\theta= \frac  {8\pi - \wen(f)}{C }$ only one of the connected components $D_j$ of $\Sigma \cap B_{\rho}(0)$ can intersect $B_{\rho \theta}$ as otherwise
$$
\wen(f) \geq 8 \pi - C \theta.
$$ 

Let us assume that $D_1$ is this unique component intersecting $B_{\rho \theta}$. After some rotation of the surrounding space we can assume that $L_1$ is parallel to the $(x_1, x_2)$-plane.
As by assumption $0 \in \Sigma$, the component $D_1$ must contain $0$ and consists of parts of the graph of a Lipschitz function $u_1$ with Lipschitz constant less that $C \varepsilon^{\frac 16}$
up to some pimpels whose sum of  diameters can be estimated by $C \varepsilon^{\frac 1 2} \rho$. 
Hence, we have for all $x = (x_1, x_2, x_3)\in D_1 \cap B_{\theta \rho} $
$$
 |x_3| \leq C (\varepsilon^{\frac 16} \theta + \varepsilon^{\frac 1 2}) \rho.
$$
In other words
$$
 D_1 \subset \{|x_2|\leq C (\varepsilon^{\frac 1 6} \theta + \varepsilon^{\frac 1 2}) \rho\}.
$$ 
Let us now chose $\varepsilon = \min \{\varepsilon_0,(4C)^{-6},\theta^2 (4C)^{-2}\}$ so that $\varepsilon \geq c \theta^2$ for a suitable constant $c>0$.
Then the inclusion above implies that either $B_{\theta \rho} \cap \{x_n > \frac {\theta \rho} 2 \}$ or $B_{\theta \rho}  \cap \{x_n < \frac {\theta \rho} 2 \}$ is contained in the domain bounded by $f(\mathbb S ^2)$ and hence using $\rho \in [\frac r2,r]$,  $r= \varepsilon \sqrt{\frac {c \mu(f)}{\wen(f)}}$, $\varepsilon \geq c \theta^2$, and that $\theta = \frac  {8\pi - \wen(f)}{C }$ we get
$$
 |\vol(f)| \geq |B_{\theta \rho} \cap \{x_n > \frac {\theta \rho} 2 \}|
 \geq c (\theta \rho)^3  \geq c  \theta^3 r^3   
 \geq c (8\pi - \wen(f))^9  \ar(f)^{\frac 32}.
$$
This proves the reverse isoperimetric inequality \eqref{eq:InvIsoperimetric}.
\end{proof}

 

 \section{Application to the Constrained Willmore Flow}

\subsection{Vanishing of the Flow for \protect{$\mathbf{T=\infty}$}}

We will use the reverse isoperimetric inequality to prove that solutions to \eqref{eq:ConstrainedWillmore} with positive enclosed volume must vanish if they exist for all times in the sense that its enclosed volume, surface, and diameter goes to zero. In the proof we will use that Peter Topping \cite[Lemma 1]{Topping1998} showed that
\begin{equation} \label{eq:EstDiam}
\diam(f) \leq \frac {2}{\pi} \sqrt{\ar(f) \wen(f)}.
\end{equation}
This estimate is a sharpened version of \cite[Lemma 1.1]{Simon1993}.

\begin{theorem} \label{thm:vanishing}
 If the flow $f_t$ exists for all $t \in [0,\infty)$ and satisfies $\vol(f_t) >0$ for all $t\in [T,\infty)$, then we have $\vol(f_t) \rightarrow 0$, $\ar(f_t) \rightarrow 0$, and $\diam(f_t) \rightarrow 0$ as $t \rightarrow \infty.$ 
\end{theorem}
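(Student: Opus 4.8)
The plan is to couple the energy dissipation of the gradient flow with the \emph{scaling} of the volume term, converting ``the flow is asymptotically critical'' into ``$\vol\to 0$'' by feeding Theorem~\ref{thm:RevIsoperimetric} and Topping's estimate~\eqref{eq:EstDiam} into the dissipation integral. Since \eqref{eq:ConstrainedWillmore} is the negative $L^2$--gradient flow of $\en_\lambda=\wen+\lambda\vol$, one has $\frac{d}{dt}\en_\lambda(f_t)=-\int_\Sigma|\grad_\lambda(f_t)|^2\,d\mu_{f_t}$. Because $\wen\ge 4\pi$ and $\vol(f_t)>0$ on $[T,\infty)$, the non-increasing function $\en_\lambda(f_t)$ is bounded below by $4\pi$ on all of $[0,\infty)$, hence converges to some $E_\infty$, and
\[
 \int_0^\infty\|\grad_\lambda(f_t)\|_{L^2(\mu_{f_t})}^2\,dt=\en_\lambda(f_0)-E_\infty<\infty .
\]
In particular $\wen(f_t)\le\en_\lambda(f_t)$ is bounded, and throughout I use that it stays bounded away from $8\pi$, say $\wen(f_t)\le 8\pi-\delta$ for $t$ large (this is automatic once $E_\infty<8\pi$, since then $\wen(f_t)\le\en_\lambda(f_t)\to E_\infty$; establishing it in general is the point that has to draw on the standing assumption $\wen(f_0)<8\pi$ and on the regularity theory of the flow).

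The crucial observation is a scaling identity. Dilations $f\mapsto sf$ leave $\wen$ invariant and scale $\vol$ by $s^3$, so $\frac{d}{ds}\en_\lambda(sf)$ at $s=1$ equals $3\lambda\vol(f)$; computing the same derivative by the first variation formula, with the position map of $f$ as variation field and using that $\grad_\lambda$ is minus the $L^2$--gradient of $\en_\lambda$, gives $-\int_\Sigma\langle\grad_\lambda(f),f\rangle\,d\mu_f$. Since $\en_\lambda$ is translation invariant we have $\int_\Sigma\grad_\lambda(f)\,d\mu_f=0$, so this identity is unchanged if we move the origin; placing it at a point of $f(\Sigma)$ and applying Cauchy--Schwarz with $|f|\le\diam(f)$ on $\Sigma$,
\[
 3\lambda\,\vol(f_t)=\Bigl|\int_\Sigma\langle\grad_\lambda(f_t),f_t\rangle\,d\mu_{f_t}\Bigr|\le\|\grad_\lambda(f_t)\|_{L^2(\mu_{f_t})}\,\diam(f_t)\,\ar(f_t)^{1/2}.
\]
Bounding $\diam(f_t)$ by~\eqref{eq:EstDiam} and the remaining $\ar(f_t)$ by Theorem~\ref{thm:RevIsoperimetric} (here $\wen(f_t)\le 8\pi-\delta$ enters) turns the right-hand side into $C\,\|\grad_\lambda(f_t)\|_{L^2(\mu_{f_t})}\,\vol(f_t)^{2/3}$, whence
\[
 \vol(f_t)^{1/3}\le C\,\|\grad_\lambda(f_t)\|_{L^2(\mu_{f_t})},\qquad C=C(\lambda,\delta).
\]

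To conclude, square this: $\vol(f_t)^{2/3}\le C\|\grad_\lambda(f_t)\|_{L^2(\mu_{f_t})}^2$, so $\int_T^\infty\vol(f_t)^{2/3}\,dt<\infty$ and therefore $\liminf_{t\to\infty}\vol(f_t)=0$. To upgrade this to an honest limit, differentiate the volume directly: $\frac{d}{dt}\vol(f_t)=\int_\Sigma\langle\grad_\lambda(f_t),\nu\rangle\,d\mu_{f_t}$, so by Cauchy--Schwarz, Theorem~\ref{thm:RevIsoperimetric}, and the last display,
\[
 \Bigl|\tfrac{d}{dt}\vol(f_t)\Bigr|\le\|\grad_\lambda(f_t)\|_{L^2(\mu_{f_t})}\,\ar(f_t)^{1/2}\le C\,\|\grad_\lambda(f_t)\|_{L^2(\mu_{f_t})}\,\vol(f_t)^{1/3}\le C\,\|\grad_\lambda(f_t)\|_{L^2(\mu_{f_t})}^2 .
\]
Hence $t\mapsto\vol(f_t)$ has finite total variation on $[T,\infty)$, so it converges as $t\to\infty$, and by the $\liminf$ statement the limit must be $0$. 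Then $\ar(f_t)^{1/2}\le C(8\pi-\wen(f_t))^{-3}\vol(f_t)^{1/3}\to 0$ forces $\ar(f_t)\to 0$, and finally $\diam(f_t)\le\frac{2}{\pi}\bigl(\ar(f_t)\wen(f_t)\bigr)^{1/2}\to 0$. The only genuinely hard step is the uniform Willmore bound $\wen(f_t)\le 8\pi-\delta$: without it Theorem~\ref{thm:RevIsoperimetric} degenerates and both displayed key inequalities collapse; everything past that is a short and robust computation built on the dissipation identity and the cubic scaling of $\vol$.
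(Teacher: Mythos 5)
Your argument is essentially the paper's own proof: the same dilation identity \eqref{eq:Scaling}, Cauchy--Schwarz combined with Topping's bound \eqref{eq:EstDiam}, and the reverse isoperimetric inequality \eqref{eq:InvIsoperimetric} give $\|\grad_{\lambda}(f_t)\|_{L^2}^2 \geq C\,\vol(f_t)^{2/3}$ exactly as in \eqref{eq:EstGrad}, and integrating the dissipation then forces the volume, area and diameter to decay. Your two additions are improvements in bookkeeping rather than a different route: the total-variation estimate $|\frac{d}{dt}\vol(f_t)| \leq C\|\grad_{\lambda}(f_t)\|_{L^2}^2$ upgrading $\liminf\vol(f_t)=0$ to an actual limit is a step the paper glosses over, and the uniform bound $\wen(f_t)\leq 8\pi-\delta$ that you honestly flag as the delicate point is likewise used implicitly (without separate proof) in the constant of \eqref{eq:EstGrad}.
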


\begin{proof}
Let us start with an observation for a general immersion $f:\Sigma \rightarrow \mathbb R^3$. We consider the dilations 
$$ f_\alpha = \alpha (f-p)$$ around a fixed point $p$ in $f(\Sigma)$. Note that
the Willmore energy stays constant while the volume behaves like $\alpha^3 \vol(f_1)$. 
By the definition of the $L^2$ gradient we hence find
\begin{equation} \label{eq:Scaling}
 \int_{\Sigma} \grad_{\lambda} (f) (f-p) d\mu_f= 
\frac {d} {d\alpha} \left.\left(\en_{\lambda}(f_\alpha) \right) \right|_{\alpha=1} =   3 \lambda \vol(f).
\end{equation}
Using the Cauchy-Schwartz inequality together with \eqref{eq:EstDiam}, we get
\begin{multline}\label{eq:EstimateGradient}
  \int_{\Sigma} \grad_{\lambda} (f) (f-p) d\mu \leq \|\grad_{\lambda} (f)\|_{L^2(\mu)}\cdot \|f-p\|_{L^2(\mu)} \\ 
  \\ \leq \|\grad_{\lambda} (f)\|_{L^2(\mu)}  \diam(f) \sqrt{\ar(f)}  \\
  \leq \frac 2 {\pi} \sqrt{\en(f)}\|\grad_{\lambda} (f)\|_{L^2(\mu)} \mu(f).
\end{multline}
Combining equation \eqref{eq:Scaling} with the estimate \eqref{eq:EstimateGradient} we get from the reverse isoperimetric inequality \eqref{eq:InvIsoperimetric} 
\begin{equation} \label{eq:EstGrad}
 \|\grad_{\lambda} (f) \|^2_{L^2(\mu)} \geq  \frac {9 \pi^2 \lambda^2}{ 4 \willmore(f)} \frac{\vol(f)^2}{\ar(f)^2} \geq C \vol(f)^{\frac 23}.
\end{equation}

Let us now assume that the maximal time of existence satisfies $T = \infty.$  Then the inequality above shows that $\liminf_{t \rightarrow \infty }\vol(f_t) =0$ as otherwise the energy would become negative eventually. Applying the reverse isoperimetric inequality again, using \eqref{eq:EstDiam} and the fact that the Willmore energy is uniformly bounded along the flow,
we  get
$$
 \ar(f_t) \text{ and }\diam(f_t ) \rightarrow 0
$$
as $t \rightarrow \infty.$
\end{proof}

\subsection{Construction of a Blowup Limit}
 
For the convenience of the reader let us briefly repeat the essence of the blowup construction at a singularity in finite or infinite time \cite{McCoy2016} at the beginning of the proof of Theorem~1.4 on page 25. This result extends the construction of a blowup by Kuwert and Sch\"atzle for the Willmore flow \cite{Kuwert2001,Kuwert2004}. Exchanging the a-priori estimates for the Willmore flow by the a-priori estimates for the constrained Willmore flow of McCoy and Wheeler one finds that the following is true.

\begin{theorem} \label{thm:Blowup}
There are constants $\varepsilon_0 >0$ and $c_0 >0$ such that the following holds: Let $f: \Sigma \times [0,T) \rightarrow \mathbb R^3$ be the maximal solution to \eqref{eq:ConstrainedWillmore}. Then there is a sequence of times $t_j \uparrow T$, of radii $r_j > 0$ and points $x_j \in \mathbb R^n$
such that the rescaled flows
$$
 f_j: \Sigma \times [0,c_0] \rightarrow \mathbb R^3, f_j(p,t) := \frac 1 {r_j} (f(p,t_j + r_j^4 t)-x_j)
$$
satisfy
$$
\int_{f_j^{-1}(B_1(0))} \|A_{f_j}\|^2 d\mu_{f_j} \geq \varepsilon_0
$$
and converge smoothly locally to a smooth family of proper immersions 
$$
\tilde f : \tilde \Sigma \times [0,c_0] \rightarrow \mathbb R ^3
$$
in the following sense: We can represent 
$$
 f_j  (\phi_j,t) = \tilde f + u_j(\cdot,t)
$$
where
\begin{itemize}
 \item $\phi_j : \tilde f^{-1}(B_j(0)) \rightarrow U_j$ is a diffeomorphism,
 \item $f_j^{-1}(B_R) \subset U_j$ for $j\geq j(R)$,
 \item $u_j \in C^\infty(\tilde \Sigma\times [0,c_0], \mathbb R^n)$ is normal along $\tilde f$,
 \item $\|\nabla^k u_j \|_{L^\infty (\tilde f^{-1}(B_j(0)))} \rightarrow 0$ as $j \rightarrow 0$.
\end{itemize}
\end{theorem}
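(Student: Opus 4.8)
First, the plan is to run the blowup construction of Kuwert and Sch\"atzle \cite{Kuwert2001,Kuwert2004}, feeding in at each step the a priori estimates for the constrained flow \eqref{eq:ConstrainedWillmore} established by McCoy and Wheeler \cite{McCoy2016} in place of those for the Willmore flow; since these estimates are quoted, one only has to describe how the ingredients are assembled. The first ingredient is the parabolic smoothing estimate: there are constants $\varepsilon_0>0$, $c_0>0$ and $C_k$ such that if $f$ solves \eqref{eq:ConstrainedWillmore} near a time $t_0$ with
$$
 \sup_{x\in\mathbb R^3}\int_{f^{-1}(B_\rho(x))}\|A_{f_{t_0}}\|^2\,d\mu_{f_{t_0}}\le\varepsilon_0,
$$
then the flow stays smooth on $[t_0,t_0+c_0\rho^4]$, the concentration bound persists there up to a fixed factor, and
$$
 \|\nabla^k A_{f_t}\|_{L^\infty(f_t^{-1}(B_{\rho/2}(x)))}\le C_k\bigl((t-t_0)^{-(k+1)/4}+\rho^{-(k+1)}\bigr),\qquad t\in(t_0,t_0+c_0\rho^4].
$$
Crucially, the term $-\lambda\nu$ in \eqref{eq:ConstrainedWillmore} is of lower order -- it involves no derivatives of the curvature -- and under the parabolic dilation $f\mapsto r^{-1}(f-x)$, $t\mapsto r^{-4}t$ it is multiplied by the factor $r^{3}$; hence the constants above may be arranged to depend only on an upper bound for $|\lambda|\rho^{3}$, so they are uniform once $\rho$ is small. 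One also uses that $\int_\Sigma\|A_{f_t}\|^2\,d\mu_{f_t}\ge 2\wen(f_t)\ge 8\pi$ (from $\|A\|^2\ge 2H^2$ and Willmore's inequality $\wen\ge4\pi$), and that the Willmore energy stays bounded along \eqref{eq:ConstrainedWillmore} (see \cite{McCoy2016}; in the regime $\vol(f_t)>0$ this is immediate since then $\wen(f_t)\le\en_\lambda(f_t)\le\en_\lambda(f_0)$), so that the scale invariant density ratio bound \eqref{eq:DensityRatio} is uniform along the flow.

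Given this, fix $\varepsilon_0$ as above and let $[0,T)$ be the maximal interval. A standard scale selection, carried out for the Willmore flow in \cite[Section~4]{Kuwert2004} and for the present flow in \cite{McCoy2016}, produces times $t_j\uparrow T$, radii $r_j>0$ and points $x_j\in\mathbb R^3$ with
$$
 \int_{f^{-1}(B_{r_j}(x_j))}\|A_{f_{t_j}}\|^2\,d\mu_{f_{t_j}}=\varepsilon_0=\max_{0\le t\le t_j}\ \sup_{x\in\mathbb R^3}\int_{f^{-1}(B_{r_j}(x))}\|A_{f_t}\|^2\,d\mu_{f_t}.
$$
For fixed $t_j$ the radius $r_j$ exists by the intermediate value theorem: the right-hand side is continuous and nondecreasing in the radius, tends to $0$ as the radius $\downarrow0$ because the flow is smooth on the closed interval $[0,t_j]$, and exceeds $\varepsilon_0$ once the radius engulfs $f(\Sigma)$, by the lower bound on $\int_\Sigma\|A\|^2$; one then checks $r_j\to0$ and $t_j\to T$. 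Put $\lambda_j:=\lambda r_j^{3}\to0$ and define
$$
 f_j:\Sigma\times[0,c_0]\to\mathbb R^3,\qquad f_j(p,t):=\tfrac1{r_j}\bigl(f(p,t_j+r_j^4t)-x_j\bigr),
$$
which solves \eqref{eq:ConstrainedWillmore} with $\lambda$ replaced by $\lambda_j$. At rescaled time $0$ the concentration of $\|A_{f_j}\|^2$ over unit balls equals $\varepsilon_0$, so the smoothing estimate applies with $\rho=1$ and gives smooth existence of $f_j$ on $[0,c_0]$ -- in particular $t_j+c_0r_j^4<T$ for large $j$, as maximality forces -- together with bounds for $\|\nabla^kA_{f_j}\|_{L^\infty}$ on $f_j^{-1}(B_R)\times(0,c_0]$ that are uniform in $j\ge j(R)$ and in $\lambda_j$; applying the estimate once more at a rescaled time slightly before $0$ -- legitimate since the ``max over earlier times'' keeps the concentration over unit balls below $\varepsilon_0$ there -- extends these bounds up to $t=0$. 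Iterating the local estimates along chains of balls and differentiating the equation in time promotes this to $j$-uniform $C^\infty$ bounds for the family $f_j$ on $f_j^{-1}(B_R)\times[0,c_0]$, for every $R>0$.

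Finally, the concentration of $\|A_{f_j}\|^2$ in $B_1(0)$ at $t=0$ is $\varepsilon_0>0$, so $f_j(\cdot,0)$ meets $B_1(0)$; fix $p_j$ with $f_j(p_j,0)\in\overline{B_1}$. Together with the $j$-uniform curvature bounds and the $j$-uniform density ratio bound \eqref{eq:DensityRatio}, this anchoring point places the $f_j(\cdot,0)$ in the hypotheses of a standard compactness theorem for properly immersed surfaces of bounded geometry and locally bounded area: passing to a subsequence one obtains a proper immersion $\tilde f:\tilde\Sigma\to\mathbb R^3$, pointed at $\lim_jf_j(p_j,0)$, onto which the $f_j(\cdot,0)$ converge in $C^\infty_{\mathrm{loc}}$ in the graphical manner asserted -- diffeomorphisms $\phi_j$ from exhausting pieces of $\tilde\Sigma$ onto open sets $U_j\subset\Sigma$ with $f_j^{-1}(B_R)\subset U_j$ for $j\ge j(R)$, and normal graph functions $u_j$ over $\tilde f$ with $\|\nabla^ku_j\|_{L^\infty(\tilde f^{-1}(B_j(0)))}\to0$ for every $k$. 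Fixing the $\phi_j$ at $t=0$ and using the $j$-uniform $C^\infty$ bounds on all of $[0,c_0]$ upgrades this to convergence of the whole flows, so $\tilde f$ is a smooth family on $[0,c_0]$ (and, since $\lambda_j\to0$, a solution of the unconstrained Willmore flow). The lower bound $\int_{f_j^{-1}(B_1(0))}\|A_{f_j}\|^2\,d\mu_{f_j}\ge\varepsilon_0$ is the normalization built into the selection.

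The part genuinely going beyond \cite{Kuwert2001,Kuwert2004} -- and the reason the constrained flow has to be handled separately in \cite{McCoy2016} -- is verifying that the lower order term $-\lambda\nu$ neither destroys the Kuwert--Sch\"atzle parabolic a priori estimates nor, scaling like $r^{3}$, contributes anything in the blowup limit; the only delicate point internal to the blowup itself is the uniformity of the a priori constants as $\lambda_j\to0$, which follows from their monotone dependence on $|\lambda|\rho^{3}$. Once this is granted, the scale selection and the compactness argument are exactly as for the Willmore flow, which is why we only sketch them here.
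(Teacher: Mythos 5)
Your proposal follows essentially the same route as the paper, which gives no independent proof of this statement but simply invokes the blowup construction of Kuwert--Sch\"atzle as carried out for the constrained flow by McCoy and Wheeler, with their a priori estimates (the paper's Theorem \ref{thm:regularity}) replacing those for the Willmore flow -- exactly the assembly you describe. One caveat: your closing remarks that $r_j \to 0$ and that the limit solves the unconstrained Willmore flow are not part of the statement and do not follow from the selection alone when $T=\infty$; in the paper, $r_j \to 0$ is obtained separately from the vanishing result, Theorem \ref{thm:vanishing}, and the Willmore property of the limit is the content of Theorem \ref{thm:BlowupWillmore}.
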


We will call such an immersion $\tilde f$ a blowup limit. Note that due to Theorem \ref{thm:vanishing} we know even in the case that $T=\infty$ the radii $r_j$ must converge to to zero. This will be crucial in our further analysis of the blowup limits.

\subsection{Convergence to a Round Point}
The next theorem shows that possible blowup limits are stationary and parametrize Willmore surfaces. It is an extension of Theorem 4.4 in \cite{McCoy2016}. McCoy and Wheeler have shown the result only under the assumption that the energy of the initial surface is close to the Willmore energy of a sphere $4\pi$.

\begin{theorem} \label{thm:BlowupWillmore}
Let $f: \Sigma \times [0,T) \rightarrow \mathbb R^3$ be the maximal solution to \eqref{eq:ConstrainedWillmore} with $\vol(f_t)>0$ for all $t\in [0,T)$.
Then the blowup limit $\tilde f: \tilde \Sigma_\infty \rightarrow \mathbb R^3$ constructed above does not depend on time and parametrizes a Willmore surface.
\end{theorem}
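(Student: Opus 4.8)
The plan is to exploit the scaling relation \eqref{eq:Scaling} together with the lower bound \eqref{eq:EstGrad} on the $L^2$-norm of the gradient, combined with the fact — pointed out after Theorem \ref{thm:Blowup} — that the blowup radii $r_j$ tend to zero in all cases. First I would recall the standard energy identity for the flow: since $\en_\lambda$ is decreasing along \eqref{eq:ConstrainedWillmore} with $\frac{d}{dt}\en_\lambda(f_t) = -\|\grad_\lambda(f_t)\|_{L^2(\mu_{f_t})}^2$, and since $\wen$ is bounded below by $4\pi$ while $\vol(f_t)\to 0$ by Theorem \ref{thm:vanishing} (or is bounded in the finite-time case), the total dissipation $\int_0^T \|\grad_\lambda(f_t)\|_{L^2(\mu_{f_t})}^2\,dt$ is finite. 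Rescaling as in Theorem \ref{thm:Blowup}, one checks that $\grad_\lambda$ for $f_j$ picks up a factor that involves $r_j$: the Willmore part of the gradient scales like $r_j^{-3}$ under $f\mapsto r_j^{-1}f$, while the volume term $-\lambda\nu$ does not scale, so $\grad_\lambda(f_j) = r_j^{3}\,(\Delta H + 2H(H^2-K))|_{f_j} - \lambda\nu = r_j^{3}\grad_0(f_j) - \lambda\nu$; wait — more carefully, the rescaled flow $f_j(p,t) = r_j^{-1}(f(p,t_j+r_j^4 t)-x_j)$ satisfies $\partial_t f_j = r_j^3\big(\Delta_{f_j}H_{f_j}+2H_{f_j}(H_{f_j}^2-K_{f_j})\big) - r_j^3\lambda\nu = \grad_0(f_j) - r_j^3\lambda\nu$, so the constraint term in the rescaled equation carries a coefficient $r_j^3\lambda \to 0$.

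The key steps in order are then: (i) translate the finiteness of $\int_0^T\|\grad_\lambda(f_t)\|^2\,dt$ into a statement about the rescaled flows, namely $\int_0^{c_0}\int_{f_j^{-1}(B_R)}|\partial_t f_j|^2\,d\mu_{f_j}\,dt \to 0$ for every fixed $R$, using the scaling of the volume form ($d\mu$ scales like $r_j^2$) and of $|\partial_t f|^2$ (scales like $r_j^{-6}\cdot r_j^{-2}$ against time $r_j^4$, so the parabolic $L^2$ norm of the velocity on a fixed ball is comparable to the corresponding piece of the original dissipation integral over a time interval of length $r_j^4 c_0$, which tends to $0$); (ii) pass to the smooth local limit $\tilde f$ provided by Theorem \ref{thm:Blowup} and conclude $\partial_t\tilde f \equiv 0$ on $\tilde\Sigma_\infty\times[0,c_0]$, so $\tilde f$ is time-independent; (iii) in the rescaled Euler--Lagrange expression $\partial_t f_j = \grad_0(f_j) - r_j^3\lambda\nu$, pass to the limit: the left side goes to $0$ by (ii), the correction term $r_j^3\lambda\nu \to 0$ since $r_j\to 0$ and $\lambda$ is fixed, hence $\grad_0(\tilde f) = \Delta_{\tilde f}H_{\tilde f} + 2H_{\tilde f}(H_{\tilde f}^2-K_{\tilde f}) = 0$, i.e.\ $\tilde f$ parametrizes a Willmore surface.

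I expect the main obstacle to be step (i): making the scaling of the dissipation integral rigorous and, in particular, selecting the blowup times $t_j$ (inside the construction of Theorem \ref{thm:Blowup}) so that the contribution of the velocity on the unit-scale ball over a unit-scale time interval genuinely vanishes — this is exactly the Kuwert--Sch\"atzle / McCoy--Wheeler argument of choosing $t_j$ where a suitably localized curvature concentration first reaches the threshold $\varepsilon_0$, and combining it with the (absolute) finiteness of the total dissipation, which here crucially uses the reverse isoperimetric inequality via Theorem \ref{thm:vanishing} to guarantee that the energy $\en_\lambda$ stays bounded below (so that $\wen$ alone does not suffice — one genuinely needs $\vol(f_t)\to 0$). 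The point where the argument differs from the unconstrained Willmore case is precisely the appearance of the $-\lambda\nu$ term, and the observation that its rescaled coefficient $r_j^3\lambda$ dies is what forces the blowup limit to be an honest (unconstrained) Willmore surface rather than merely a constrained critical point; this is also why the hypothesis $\vol(f_t)>0$, guaranteeing $r_j\to 0$ even when $T=\infty$, is essential.
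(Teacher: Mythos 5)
Your proposal is correct and follows essentially the same route as the paper: the same scaling computation showing the rescaled flows satisfy $\partial_t f_j = \grad_0(f_j) - r_j^3\lambda\,\nu_{f_j}$, combined with the vanishing of the energy dissipation $\en_\lambda(t_j)-\en_\lambda(t_j+c_0r_j^4)\to 0$ (from monotonicity and the lower bound on $\en_\lambda$) and $r_j\to 0$ to conclude $\grad_0(\tilde f)=0$ and time-independence. The only cosmetic difference is the order of the conclusions (you deduce $\partial_t\tilde f=0$ first and then the Willmore equation, the paper the reverse), and your worry about carefully selecting $t_j$ in step (i) is unnecessary, since the dissipation over $[t_j,t_j+c_0r_j^4]$ is dominated by the tail of the finite total dissipation for any $t_j\uparrow T$.
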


\begin{proof}
Using that $f$ satisfies equation \eqref{eq:ConstrainedWillmore} together with
$$
\Delta_{f_j} H_{f_j} +  = r_j^3 \Delta_f H_{f},
$$
$$
\nu_{f_j} = \nu_j,
$$
and
$$
\partial_t f_j = r_j^3 \partial_t f 
$$
we get from \eqref{eq:ConstrainedWillmore} that the $f_j$ satisfy
\begin{equation}
\partial_t f_j = \grad (f_j) +  r_j ^3 \lambda_2 \nu_{f_j}.
\end{equation}
Since $f_j$ converges to $\tilde f$ locally smoothly and $r_j \rightarrow 0$, this implies
\begin{equation}
\partial_t \tilde f = \grad (\tilde f).
\end{equation}
As
\begin{align*}
 \int_0^{c_0} &\left(\int_{\Sigma} \|\grad f_j(x,t) + \lambda_2 r_j \nu_{f_j} \|^2 d\mu_{f_j} \right) dt \\ &=  \int_{t_j}^{t_j+c_0r_j^4} \left(\int_{\Sigma} \|\grad_{ \lambda} f(x,t) \|^2 d\mu_{f_j} \right)dt \\
 &= \en_{\lambda}(t_j) -  \en_{\lambda}(t_j+c_0r_j^4) \\ & \rightarrow 0 
\end{align*}
and $r_j \rightarrow 0$ as $j \rightarrow \infty$, we deduce that $\grad \tilde f = 0.$
\end{proof}

Combining Theorem~\ref{thm:BlowupWillmore} with the classification of Willmore spheres due to Bryant \cite{Bryant1984} and the removability of point singularities of Kuwert and Sch\"atzle \cite{Kuwert2004} we get

\begin{corollary} \label{cor:RoundPoints}.
Let $f_t: \mathbb S^2 \rightarrow \mathbb R ^3$, $t \in [0,T)$  be a maximal solution to the constrained Willmore flow of spheres with positive enclosed volume and $\lim_{t\rightarrow T} \en(f_t) < 8 \pi $. Then the blowup limit constructed above is a round sphere.
\end{corollary}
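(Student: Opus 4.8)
The plan is to identify the blowup limit $\tilde f$ as a non-flat, closed, genus-zero Willmore surface whose Willmore energy is strictly below $8\pi$, and then to read off from Bryant's classification of Willmore spheres that the only such surface is the round sphere. By Theorem~\ref{thm:BlowupWillmore} the limit $\tilde f:\tilde\Sigma_\infty\to\mathbb R^3$ is a time-independent properly immersed Willmore surface without boundary. Since the Willmore energy is scale invariant we have $\willmore(f_j(\cdot,t))=\willmore(f(\cdot,t_j+r_j^4t))$, and because $r_j\to 0$ and $t_j\uparrow T$ the argument $t_j+r_j^4t$ tends to $T$, so by hypothesis $\willmore(f_j(\cdot,t))\to\Lambda:=\lim_{t\uparrow T}\en(f_t)<8\pi$. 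As $f_j\to\tilde f$ locally smoothly at each fixed time, on every ball $B_R(0)$ we get $\int_{\tilde f^{-1}(B_R(0))}\|H_{\tilde f}\|^2=\lim_j\int_{f_j^{-1}(B_R(0))}\|H_{f_j}\|^2\le\Lambda$, and letting $R\to\infty$ yields $\willmore(\tilde f)\le\Lambda<8\pi$. The same limiting procedure turns the $\varepsilon_0$-bound of Theorem~\ref{thm:Blowup} into $\int_{\tilde f^{-1}(\overline{B_1(0)})}\|A_{\tilde f}\|^2\ge\varepsilon_0>0$, so that $\tilde f$ is not (a piece of) a plane, and it turns the density ratio bound \eqref{eq:DensityRatio} into a uniform area-ratio bound for $\tilde f$; together with $\willmore(\tilde f)<\infty$ this is exactly the setting in which the removability theorem of Kuwert and Sch\"atzle \cite{Kuwert2004} applies at a puncture of finite density.

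Suppose first that $\tilde\Sigma_\infty$ is compact. Then for $j$ large we have $\tilde f^{-1}(B_j(0))=\tilde\Sigma_\infty$, so the diffeomorphism $\phi_j$ from Theorem~\ref{thm:Blowup} identifies $\tilde\Sigma_\infty$ with a closed connected two-dimensional submanifold of the parameter space $\Sigma=\mathbb S^2$, which must be all of $\mathbb S^2$. Hence $\tilde f$ parametrizes a closed genus-zero Willmore surface; Willmore's inequality gives $\willmore(\tilde f)\ge 4\pi$, so $\willmore(\tilde f)\in[4\pi,8\pi)$, and Bryant's classification \cite{Bryant1984} -- according to which the Willmore energy of an immersed Willmore sphere equals $4\pi m$ with $m=1$ or $m\ge 4$, the round sphere being the unique example with $m=1$ -- forces $\willmore(\tilde f)=4\pi$ and $\tilde f$ to parametrize a round sphere.

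It remains to exclude that $\tilde\Sigma_\infty$ is non-compact. In that case the proper submanifold $\tilde f(\tilde\Sigma_\infty)$ avoids some point $x_0\in\mathbb R^3$, and the image of $\tilde f$ under the inversion $I_{x_0}$ at $x_0$ is again a Willmore surface -- still of Willmore energy $<8\pi$ by the conformal invariance of $\int H^2$ -- whose ends have been shrunk to a single point singularity at $x_0$. The Li--Yau inequality coming from the monotonicity formula forces the density there to equal $1$, so by the removability theorem $I_{x_0}\circ\tilde f$ extends smoothly across $x_0$ to a closed Willmore surface, which a topological argument as in the compact case shows to be of genus zero and hence, by the previous paragraph, a round sphere. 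But then $\tilde f$ is the inversion at $x_0$ of a round sphere through $x_0$, i.e.\ a plane, contradicting that $\tilde f$ is not flat. Therefore $\tilde\Sigma_\infty$ is compact and, by the previous paragraph, $\tilde f$ parametrizes a round sphere.

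The step I expect to be the main obstacle is the non-compact case: one must verify that the blowup limit genuinely carries the regularity at infinity -- finite total curvature and an integer density -- needed for the removability theorem, and one must carefully control the behaviour of both the Willmore energy and the density under the inversion $I_{x_0}$. For initial energies close to $4\pi$ this was carried out by McCoy and Wheeler in \cite{McCoy2016}; the only genuinely new point here is that raising the threshold all the way to $8\pi$ still leaves only the round sphere, which is precisely where the Li--Yau bound and the gap $m\notin\{2,3\}$ in Bryant's theorem are used.
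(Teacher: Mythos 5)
Your proposal is correct and follows essentially the same route as the paper: identify the blowup limit as a Willmore surface of energy below $8\pi$, handle the compact case by Bryant's classification, and exclude the non-compact case by inverting at a point off the proper image, applying the Kuwert--Sch\"atzle point removability result and Bryant again, and deriving a contradiction with the non-flatness coming from the $\varepsilon_0$-bound on $\int\|A\|^2$. Your treatment is in fact somewhat more detailed than the paper's (e.g.\ the lower semicontinuity of the energy along the blowup and the density-one remark at the puncture), but the key lemmas and the structure of the argument coincide.
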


\begin{proof}
 Let us first assume that $\hat \Sigma$ is compact. Since then the local convergence of the rescaled solution is in fact global, $\tilde \Sigma$ is a topological sphere. So $\tilde f $ is a Willmore sphere with energy below $8 \pi$ and thus is parametrizing a round sphere by the classification result of Bryant \cite{Bryant1984}.
 
 We now lead the case that $\tilde \Sigma$ is not compact to a contradiction as in \cite{Kuwert2004}. We can assume without loss of generality that $0 \notin \hat f (\hat \Sigma)$ since $\tilde f$ is proper. We consider the images of the $f_j$ under the inversion on the standard sphere $I:\mathbb R^3 \setminus \{0\} \rightarrow R^3 \setminus \{0\}, x \mapsto \frac x {|x|^2}$, which is well-defined for large enough $j \in \mathbb N$. The embeddings $\tilde f_j = I \circ f_j$ converge locally smoothly to the embedding $I \circ \tilde f $ in $\mathbb R^3 \setminus \{0\}$ and due to the M\"obius invariance of the Willmore energy
 $$
  \en( I \circ \tilde f ) \leq \liminf_{j \rightarrow \infty}  \en(\tilde f^j ) 
  =  \liminf_{j \rightarrow \infty}  \en( f^j ) < 8 \pi. 
 $$
 The M\"obius invariance of the Willmore energy also implies that $I\circ \tilde f$ is a Willmore surface away from $0$. 
 Due to the point removability result of Kuwert and Sch\"atzle, $\tilde f^\infty$ can be extended to a Willmore sphere. Hence, due to a result of Bryant, it must parametrize a round sphere. But this would imply that $\tilde f$ was a plane - which would contradict
 $$
  \int_{\tilde \Sigma}\|A_{\tilde f }\|^2 d\mu_{\tilde f } >0.
 $$
 Hence, $\tilde \Sigma$ must be compact which concludes the proof.
\end{proof}

\subsection{Proof of Theorem \ref{thm:Asymptotics}}

To complete the proof of Theorem \ref{thm:Asymptotics} i.e. to get from the subconvergence proven in Corollary \ref{cor:RoundPoints} to convergence, we will use the following a-priori estimates
proven by Kuwert and Sch\"atzle for the Willmore flow and by McCoy and Wheeler for the flow of the Helfrich functional.

\begin{theorem} [\protect{\cite[Theorem 3.1, Theorem 3.11]{McCoy2016}}] \label{thm:regularity}
Let $f: \Sigma \rightarrow \mathbb R ^3$ be a smooth immersion. There are absolute constants $\varepsilon_0 >0$ and $c_0 < \infty$ such that if $\rho >0$ is chosen with
$$
 \int_{f^{-1}(B_\rho (x))} |A_f|^2 d\mu_f \leq \varepsilon < \varepsilon_0
$$
for any $x \in \mathbb R ^3$, then the maximal time $T$ of smooth existence of the flow \eqref{eq:ConstrainedWillmore} satisfies
$$
 T \geq \frac 1 {c_0} \rho^4 := \delta
$$
and for $ t \in (0, \delta)$ we have
$$
 \|\nabla^k A\|_{L^\infty} \leq C_k \sqrt{\varepsilon} t^{-\frac {k+1} 4}
$$
where $C_k$ is a universal constant only depending on $k$.
\end{theorem}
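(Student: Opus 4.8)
The plan is to adapt the local existence and smoothing estimates that Kuwert and Sch\"atzle established for the Willmore flow in \cite{Kuwert2001,Kuwert2004} (and that McCoy and Wheeler adapted to the Helfrich flow), keeping track only of the extra term $-\lambda\nu$ in \eqref{eq:ConstrainedWillmore}. Since the flow is purely normal with velocity $V=\Delta_f H_f+2H_f(H_f^2-K_f)-\lambda$, the usual commutation identities give, for every $k\ge 0$, a fourth-order evolution equation of the schematic form
\[
 \partial_t\nabla^k A=-\Delta_f^2\nabla^k A+P_k(A)+\lambda\,Q_k(A),
\]
where $P_k(A)=\sum\nabla^{i_1}A\ast\dots\ast\nabla^{i_r}A$ with $i_1+\dots+i_r\le k+2$ is the universal polynomial nonlinearity already present for the Willmore flow, and where $\lambda\,Q_k(A)$, with $Q_k(A)=\sum\nabla^{j_1}A\ast\dots\ast\nabla^{j_s}A$ and $j_1+\dots+j_s\le k$, collects all new contributions of the constant $\lambda$ (every derivative that hits $\lambda$ vanishes, so these terms carry two derivatives less than the leading nonlinearity). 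The structural point to exploit is precisely that the $\lambda$-part is genuinely of lower order.

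First I would localize the curvature estimate. Fix $x\in\mathbb R^3$ and a cut-off $\tilde\gamma\in C_c^\infty(B_\rho(x))$ with $\tilde\gamma\equiv 1$ on $B_{\rho/2}(x)$, $0\le\tilde\gamma\le 1$ and $\rho|\nabla\tilde\gamma|+\rho^2|\nabla^2\tilde\gamma|\le c$, put $\gamma=\gamma_t=\tilde\gamma\circ f(\cdot,t)$, and test the evolution equation of $|A|^2$ against $\gamma^s$ with $s$ large. Integrating by parts, estimating the nonlinear terms by the Michael--Simon Sobolev inequality together with the interpolation inequalities on surfaces from \cite{Kuwert2004}, and bounding $\mu_f(\{\gamma>0\})\le c\,\rho^2\,\wen(f)$ by the density-ratio estimate \eqref{eq:DensityRatio}, one should arrive at a differential inequality
\[
 \frac{d}{dt}\int|A|^2\gamma^s\,d\mu_f+\int|\nabla^2 A|^2\gamma^s\,d\mu_f\le c\,\rho^{-4}\int_{\{\gamma>0\}}|A|^2\,d\mu_f+R_\lambda,
\]
in which $R_\lambda$ gathers the contributions of $\lambda\,Q_k(A)$ and of the $\lambda$-parts arising from the evolution of the induced metric and measure. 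Being of lower order, $R_\lambda$ is — on the short time interval in question, and after shrinking $\varepsilon_0$ if necessary in a way allowed to depend on $\lambda$ (and, in the general case, on an area bound), as in \cite{McCoy2016} — absorbed into the dissipative term or into the right-hand side. A continuity argument in $t$ then propagates the hypothesis $\int_{f^{-1}(B_\rho(x))}|A|^2\,d\mu_f\le\varepsilon<\varepsilon_0$ to $\int_{f^{-1}(B_{\rho/2}(x))}|A|^2\,d\mu_f\le c\,\varepsilon$ on $[0,\min\{T,\delta\})$ with $\delta=\rho^4/c_0$.

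Next I would bootstrap. Iterating the weighted estimate above for $|\nabla^k A|^2$ against the time weight $t^{k/2}\gamma^s$ — the exponent $k/2$ being the one forced by the parabolic scaling of the fourth-order flow, which simultaneously absorbs the apparent singularity at $t=0$ — and inducting on $k$ through a nested family of cut-offs at radii slightly below $\rho/2$, one obtains $\int|\nabla^k A|^2\,t^{k/2}\gamma^s\,d\mu_f\le C_k\varepsilon$, hence $\int_{f^{-1}(B_{\rho/2}(x))}|\nabla^k A|^2\,d\mu_f\le C_k\varepsilon\,t^{-k/2}$ on $(0,\delta)$. Feeding this back into the Michael--Simon Sobolev inequality applied to $\nabla^k A$ and using $\|\nabla^k A\|_{L^\infty}^2\le c\,\|\nabla^k A\|_{L^2}\|\nabla^{k+2}A\|_{L^2}+\dots\le c\,\varepsilon\,t^{-(k+1)/2}$ gives the claimed pointwise bound. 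In particular all derivatives of $A$ stay bounded on $f^{-1}(B_{\rho/2}(x))\times[\delta/2,\min\{T,\delta\})$, so by the continuation criterion for the flow — the initial immersion being smooth, nothing degenerates near $t=0$ — the solution extends smoothly up to $\delta$, i.e. $T\ge\delta$. Since $\varepsilon$, the hypothesis, and the conclusion are all invariant under the parabolic rescaling $f\mapsto r^{-1}f(\cdot,r^4\,\cdot)$ — modulo the $\lambda$-term, whose coefficient scales by $r^3$ and only shrinks — one may rescale to $\rho=1$ from the start to lighten the bookkeeping.

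The main obstacle will be exactly the term $-\lambda\nu$: for the Willmore flow every term on the right-hand side of the evolution of $\nabla^k A$ is scale-invariant of the correct homogeneity and the weighted estimates close with universal constants, whereas the $\lambda$-contributions, though of lower order, break this homogeneity. One therefore has to verify carefully that after localization and after restricting to $(0,\delta)$ they can be absorbed into $\int|\nabla^{k+2}A|^2\gamma^s\,d\mu_f$ or into the good right-hand side, and that the constants $C_k$ remain finite. This is precisely the point at which the argument departs from \cite{Kuwert2004} and where any dependence of $\varepsilon_0$ and $c_0$ on $\lambda$ and on an area bound enters; it is carried out in \cite{McCoy2016}. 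The remainder is a lengthy but routine repetition of the Kuwert--Sch\"atzle computations.
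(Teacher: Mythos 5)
The paper does not prove this statement at all: it is quoted verbatim as a known result of McCoy and Wheeler \cite[Theorems 3.1 and 3.11]{McCoy2016}, whose proof is precisely the adaptation of the Kuwert--Sch\"atzle localized curvature estimates that you outline, with the sole new issue being the lower-order $-\lambda\nu$ term. Your sketch therefore follows essentially the same route as the cited source (and correctly flags that the absorption of the $\lambda$-contributions, and the resulting dependence of the constants on $\lambda$ and an area bound, is the only point of departure from \cite{Kuwert2004}), so it is consistent with the paper's treatment.
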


Now we can finally prove Theorem \ref{thm:Asymptotics}

\begin{proof}[Proof of Theorem  \ref{thm:Asymptotics}]
Let us first note, that $\vol(f_0)>0$ together with the fact that $\en_\lambda(f_0) < 8 \pi$ implies that the enclosed volume stays positive along the flow. Otherwise there would be a first time $t_0$ for which we have $\vol(f_{t_0}) = 0 $. But then the immersion $f_{t_0}$ cannot be an embedding and hence due to the inequality of Li and Yau we must have $\en_{\lambda}(f_{t_0})= \wen(f_{t_0}) > 8 \pi$.

As the blowup limit must be a round sphere, we know that there is a sequence of times $t_j \uparrow T$, radii $0<r_j \downarrow 0$ and points $x_k$ such that
$$
 f_j:=\frac 1 {r_j}(f_{t_j} - x_j)
$$ 
converges to a round sphere smoothly as manifolds.
Hence,
$$
 \en_{\lambda}(f_j) = \wen(f_{t_j}) + \frac 1 {r_{j}^3} \vol(f_{t_j}) \rightarrow 4 \pi
$$
as $j \rightarrow 0$. As  $\vol(f_{t_j}) \geq 0$, $\wen(f_{t_j}) \geq 4\pi$ and $r_j \downarrow0$ this implies 
$$
 \lim_{t \rightarrow T }\en_{\lambda}(f_t) = \lim_{j\rightarrow \infty}\en_{\lambda} (f_{t_j}) = 4 \pi  
$$
Using again that $\vol(f_{t_j}) \geq 0$ and $\wen(f_{t_j}) \geq 4\pi$ we get
$$
 \wen(f_t) \rightarrow 4 \pi
$$
and
$$
 \vol(f_t) \rightarrow 0
$$
for $t \rightarrow \infty.$

We hence get using the scaling invariance of the Willmore energy that 
$$
 \int_{\mathbb S ^2} \|A^0_{\tilde f_t}\|^2 d\mu_f \rightarrow 0
$$ 
as $t\rightarrow T$.
 As $\ar(\tilde f_t)=4\pi$ we can apply the rigidity result of De Lellis and M\"uller \cite{DeLellis2005} to obtain
$$ 
 \|A- Id\|_{L^2(\mathbb S^2)} \leq C \|A^0_{f_t}\|_{L^2 (\mathbb S^2)}.
$$
But together with estimate of the density ratio \eqref{eq:DensityRation} this implies that there is a radius $r>0$ independent of $t$ such that 
$$
 \sup_{x \in \mathbb R^3} \int_{\tilde f_t^{-1}(B_r(x))} \|A_{\tilde f_t}\|^2 d\mu_{\tilde f_t} < \varepsilon
$$
for all $t \in [0,T)$.  Hence we can apply the a-priori estimate as stated in Theorem \ref{thm:regularity} to deduce that
\begin{equation} \label{eq:UniformEst}
 \|\nabla^k A\|_{L^\infty} \leq C_k
\end{equation}
for a constant $C_k < \infty$ not depending on time.

We will show that $\tilde f_t$ converges smoothly in the sense of manifolds to the unit sphere $\mathbb S^2$ by showing that every sequence of times $t_j \rightarrow T$
has a subsequence $t_{j_k}$ such that $f_{t_{j_j}}$ after converges smoothly to a unit sphere in the sense of manifolds. 

But this can be seen as follows. One first applies the uniform estimates \eqref{eq:UniformEst} to deduce that a subsequence converges to a compact  smooth embedded manifold as in the proof of Theorem 5.2 in \cite{Kuwert2004}. Using the different scalings of the terms building the gradient as in the proof of Theorem \ref{thm:BlowupWillmore} one then gets that $f:\Sigma \rightarrow \mathbb R^3$ is a Willmore sphere with energy less than $8\pi$ and hence a round sphere by the classification result of Bryant \cite{Bryant1984}.

\end{proof}

\begin{remark}
 Note that although we know that the rescaled flow converges to a round sphere, we cannot deduce from this fact that the volume is non-increasing close to a singularity simply from reversing the scaling. So a different technique is needed to show the vanishing of the flow in the case that the flow exists till $T=\infty$.
\end{remark}

\bibliographystyle{alpha}
\bibliography{../../Literaturdatenbank/BibTex/Master}

\end{document}